\newcommand{\p}{\partial}
\newcommand{\R}{\mathbb{R}}
\newcommand{\C}{\mathbb{C}}
\newcommand{\ra}{\rightarrow}
\newcommand{\id}{\text{id}}
\newtheorem{theorem}{Theorem}
\newtheorem{lemma}[theorem]{Lemma}
\theoremstyle{definition}
\newtheorem{definition}[theorem]{Definition}
\newtheorem{prop}[theorem]{Proposition}
\newtheorem*{Acknow}{Acknowledgements}
\newtheorem{remark}[theorem]{Remark}
\newcommand{\A}{\mathcal{A}}
\theoremstyle{definition}
\newtheorem{exmp}[theorem]{Example}
\DeclareMathOperator{\im}{im}
\newcommand{\vdotrows}[2][-15pt]{%
  \multirow{#2}{*}{%
    \vbox {%
      \baselineskip = \dimexpr 2pt\relax 
      \multiply\baselineskip by #2\relax
      \advance\baselineskip by 2pt\relax
      \lineskiplimit = 0pt\relax
      \kern 6pt\relax
      \vskip #1\relax%
      \hbox {.}\hbox {.}\hbox {.}}%
  }%
}
\title[The Hard Lefschetz Theorem on K\"ahler Lie Algebroids]{The Hard Lefschetz Theorem on K\"ahler Lie Algebroids}
\author{Shane Rankin}
\address{Department of Mathematics, University of California, Riverside, CA 92521, USA}
\email{srank007@ucr.edu}
\pgfplotsset{compat=1.18}
\begin{document}

\begin{abstract}
    Compact Kähler manifolds classically satisfy the Hard Lefschetz Theorem, which gives strong control on the underlying topology of the manifold. One expects a similar theorem to be true for Kähler Lie Algebroids, and we show for a certain class of them that this is indeed true, with an added ellipticity requirement. We provide examples of Lie Algebroids satisfying this, as well as an example of a Kähler Lie Algebroid that does not meet this ellipticity requirement and fails to satisfy the Hard Lefschetz condition.
\end{abstract}
\maketitle

\tableofcontents
\section{Introduction}
\subsection{Background}
There are a number of celebrated and named theorems in classical Kähler Geometry demonstrating the excellent behavior of such objects, i.e. The Kodaira and Nakano Vanishing Theorems, The Hard Lefschetz Theorem, The Hodge Index Theorem, etc; though in this paper the Hard Lefschetz Theorem will be our particular focus. Classically on an oriented compact manifold of dimension $n$ we have that 
\begin{equation*}
    \dim(H_{dR}^{k}(M)) \cong \dim(H_{dR}^{n-k}(M)) \qquad \forall k \in \{0,\dots, n\}
\end{equation*} by Poincar\'e Duality, where $H_{dR}^k(M)$ are the deRham cohomology groups of $M$. The Hard Lefschetz Theorem can be viewed as saying that on a compact Kähler manifold, wedging with an appropriate power of the Kähler class is an isomorphism witnessing this phenomenon. Recently, there has been attention towards Kähler Lie Algebroids, see \cite{MR4722602}, where it was shown that both the Kodaira Vanishing Theorem and the Hodge-Riemann Bilinear relations still hold true. 
\subsection{Results}
 The general outline of the proof and paper is as follows: The notions of Kähler Lie algebroids and some of the necessary Kähler identities have been previously established in \cite{MR4722602}; separately, Hodge theory for Lie algebroids has been established in \cite{MR4377306}. In section 2, we recall these results, and show that the two papers results are compatible, though they use different definitions. In section 3, we establish Lie algebroid analogues of some necessary relations from classical Kähler geometry, and use them to prove the main results. In section 4, we discuss impacts of this on the Betti numbers of the algebroid, provide a method of constructing examples, and provide an example of a Kähler Lie algebroid with non-elliptic anchor that fails to satisfy the Hard Lefschetz Theorem. Before we proceed to section two, we present the main results here. Collecting all the necessary conditions to make the aforementioned papers communicate with each other, we say that a K\"ahler Lie Algebroid $(\A,\rho ,[\cdot , \cdot], h)$ over a manifold $M$ is \textit{Hodge-admissible} if the following conditions hold:
    \begin{enumerate}
        \item $M$ is compact, orientable, and closed.
        \item $\A$ is elliptic.
        \item $\A$ carries an integrating section.
    \end{enumerate}
With such a Lie algebroid, we regain a Dolbeault-type decomposition of the Lie algebroid cohomology as well as the Hard Lefschetz Theorem.
\begin{restatable}{thm}{decomp}
     Let $(\A,\rho, [ \cdot, \cdot],h)$ be a Hodge-admissible K\"ahler Lie algebroid. Then the harmonic forms are bigraded, that is
    \begin{equation*}
        \mathcal{H}^k = \bigoplus_{p+q=k}\mathcal{H}^{p,q},
    \end{equation*}
    where $\mathcal{H}^{p,q} = \ker(\Delta) \cap\Omega_\A^{p,q}$ is finite-dimensional. We also have that
    \begin{equation*}
        H_\A^k(M;\C) \cong \bigoplus_{p+q=k} \mathcal{H}^{p,q},
    \end{equation*}
    where $H_\A^k(M;\C)$ is the Lie Algebroid cohomology.
\end{restatable}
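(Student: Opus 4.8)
The plan is to assemble the abstract Hodge decomposition for an elliptic complex together with the Kähler identities on $\A$, and then split the harmonic space by type. I would begin by fixing the analytic $L^2$-structure: using the Lie algebroid metric $g_\A$ together with the $\frac{\omega_\A^m}{m!}$-normal integrating section $\eta$, one defines a Hermitian inner product on the complexified $\A$-forms by integrating the pointwise Hermitian pairing against the density determined by $\eta$. The normality of $\eta$ is precisely what guarantees that integration by parts against $\dee$ carries no defect term, so that the formal adjoint $\dee^*$ is well-defined and the Laplacian $\Delta = \dee\,\dee^* + \dee^*\dee$ is formally self-adjoint with respect to this product.

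Next I would verify ellipticity. The hypothesis that $\A$ is elliptic means, in the sense of \cite{MR4377306}, that the principal symbol sequence of the Lie algebroid complex $(\Omega_\A^\bullet, \dee)$ is exact off the zero section of $T^*M$; equivalently $\Delta$, viewed as a differential operator on sections of $\Lambda^\bullet \A^*$ over the compact manifold $M$, has injective, hence bijective, symbol away from the zero covector. Since $M$ is compact, orientable and closed, $\Delta$ is a self-adjoint elliptic operator, and the standard Fredholm and elliptic-regularity package yields the orthogonal decomposition
\begin{equation*}
\Omega_\A^k(M;\C) = \mathcal{H}^k \oplus \im(\dee) \oplus \im(\dee^*),
\end{equation*}
with $\mathcal{H}^k = \ker \Delta$ finite-dimensional and the induced map $\mathcal{H}^k \to H_\A^k(M;\C)$ an isomorphism. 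This already delivers the second displayed formula once the bigrading is established.

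The bigrading is where the Kähler hypothesis enters, and I expect this to be the crux. Following Hu \cite{MR4722602}, the compatibility of $g_\A$, $\omega_\A$, $J_\A$ yields the Kähler identities on $\A$, and in particular the relation $\Delta = 2\Delta_{\dee} = 2\Delta_{\deebar}$ between the full Laplacian and the Dolbeault Laplacians $\Delta_{\deebar} = \deebar\,\deebar^* + \deebar^*\deebar$. Because $\deebar$ raises the bidegree by $(0,1)$ and $\dee$ by $(1,0)$, the operator $\Delta_{\deebar}$ preserves the type decomposition $\Omega_\A^k(M;\C) = \bigoplus_{p+q=k}\Omega_\A^{p,q}(M;\C)$; hence so does $\Delta$. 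A harmonic $k$-form therefore has each of its $(p,q)$-components separately harmonic, giving
\begin{equation*}
\mathcal{H}^k = \bigoplus_{p+q=k} \mathcal{H}^{p,q}, \qquad \mathcal{H}^{p,q} = \ker \Delta \cap \Omega_\A^{p,q},
\end{equation*}
and each summand is finite-dimensional as a subspace of the finite-dimensional $\mathcal{H}^k$. Combining the two displays completes the proof.

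The main obstacle is not the linear algebra of the splitting but the passage between the intrinsic algebroid complex and its analytic incarnation as an elliptic PDE on $M$: one must check carefully that ellipticity of $\A$ transfers to honest ellipticity of $\Delta$ as an operator on $M$, so that the compact-manifold Hodge theorem applies, and that the Kähler identities of \cite{MR4722602}—proved pointwise at the level of symbols—are compatible with the \emph{global} adjoint $\dee^*$ built from $\eta$. Once these compatibilities are confirmed, the argument is the classical Kähler Hodge-theoretic one transported verbatim to the Lie algebroid setting.
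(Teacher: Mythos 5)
Your proposal is correct and follows essentially the same route as the paper: the paper likewise invokes the elliptic Hodge decomposition of van der Leer Dur\'an (Theorem \ref{HodgeDecomp}) to obtain $\mathcal{H}^k \cong H_\A^k(M;\C)$ with finite dimensionality, and then uses $\Delta = 2\Delta_{\overline{\p}}$ (Proposition \ref{AllLaplaciansAreSame}), which preserves bidegree, to conclude that the $(p,q)$-components of a harmonic form are themselves harmonic. Your additional remarks on transferring ellipticity of $\A$ to ellipticity of $\Delta$ and on compatibility of the adjoint with $\eta$ are exactly the content the paper packages into its Hodge-admissibility hypotheses, so no new argument is needed.
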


\begin{restatable}{thm}{HLT}
     If $(\A,\rho ,[\cdot, \cdot],h)$ is a Hodge-admissible K\"ahler Lie Algebroid of rank $2m$, then $\A$ satisfies the $\A$-Hard Lefschetz Theorem. That is, the map
    \begin{align*}
        [L]^k:H^{m-k}_\A(M) &\ra H_\A^{m+k}(M) \\
        [\alpha] &\mapsto [\omega^k\wedge \alpha]
    \end{align*}
    is an isomorphism for all $k \geq 0$, where $\omega$ is the $\A$-Kähler form associated to $h$.
\end{restatable}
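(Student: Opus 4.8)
The plan is to follow the three-step strategy sketched in the introduction, treating the bigrading and Hodge isomorphism just established (the decomposition theorem above) as the analytic input. Write $L = \omega_\A \wedge (\cdot)$ for the Lefschetz operator on $\Omega_\A^\bullet$ and let $\Lambda$ be its fiberwise adjoint with respect to $g_\A$. Following the symplectic Hodge theory for Lie algebroids developed in \cite{rankin2024symplectichodgetheorylie}, introduce the symplectic codifferential $\delta_\A = [d_\A,\Lambda]$ and call a form \emph{symplectic-harmonic} if it lies in $\ker d_\A \cap \ker \delta_\A$. The Lie-algebroid analogue of the Mathieu--Yan theorem proved there asserts that $\A$ satisfies the $\A$-Hard Lefschetz Theorem precisely when every class in $H_\A^\bullet(M;\C)$ admits a symplectic-harmonic representative. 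So the entire task reduces to producing such representatives.

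The key reduction is then to identify symplectic-harmonic forms with ordinary $\Delta$-harmonic forms. First I would transport the Kähler identities to the algebroid, using the Hodge--Riemann machinery of Hu \cite{MR4722602}: in the $\dee,\deebar$ form these read $[\Lambda,\dee] = i\,\deebar^{\,*}$ and $[\Lambda,\deebar] = -i\,\dee^{\,*}$, whence $[\Lambda,d_\A] = -(d_\A^c)^*$ and $\delta_\A = (d_\A^c)^*$. A $\Delta$-harmonic form $\alpha$ satisfies $\dee^{\,*}\alpha = \deebar^{\,*}\alpha = 0$, so $\delta_\A\alpha = 0$ and $d_\A\alpha = 0$, giving $\mathcal{H}^\bullet \subseteq \ker d_\A \cap \ker\delta_\A$; the reverse inclusion follows exactly as in Brylinski's classical computation, the Kähler identities forcing a form annihilated by $d_\A$ and $\delta_\A$ to be annihilated by $d_\A^*$ as well. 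Hence $\ker d_\A \cap \ker\delta_\A = \mathcal{H}^\bullet$.

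Combining the two steps finishes the argument: by the decomposition theorem every class in $H_\A^k(M;\C)$ has a unique harmonic representative, which by the identification is its unique symplectic-harmonic representative, so the Mathieu--Yan criterion delivers Hard Lefschetz. Concretely, the same Kähler identities yield $[\Delta,L]=0$, so the $\mathfrak{sl}_2$-triple $(L,\Lambda,[L,\Lambda])$ preserves $\mathcal{H}^\bullet$; finite-dimensional $\mathfrak{sl}_2$-representation theory then makes $L^k:\mathcal{H}^{m-k}\to\mathcal{H}^{m+k}$ an isomorphism, and transporting this along the Hodge isomorphism $H_\A^{m\mp k}(M;\C)\cong\mathcal{H}^{m\mp k}$ gives that $[L]^k$ is an isomorphism, as claimed.

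The main obstacle is Step 2: establishing the Kähler identities and the commutation $[\Delta,L]=0$ in the algebroid setting, where the classical local Weitzenböck-type computation must be carried out with the anchor $\rho$ and bracket $[\cdot,\cdot]_\A$ in place of the genuine tangent-bundle operations. This is exactly where Hodge-admissibility is indispensable: ellipticity produces a genuinely elliptic Laplacian with finite-dimensional kernel and the Hodge decomposition, while the $\frac{\omega_\A^m}{m!}$-normal integrating section $\eta$ is what makes the formal adjoints $\dee^{\,*}$, $\deebar^{\,*}$, and $\delta_\A$ well defined and mutually compatible, so that the symplectic and metric theories agree. Verifying that the identities survive this transfer, rather than the representation-theoretic bookkeeping, is the crux.
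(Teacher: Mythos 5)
Your high-level strategy is the same as the paper's: reduce the Hard Lefschetz property to the existence of symplectic-harmonic representatives via the Lie-algebroid Mathieu--Yan theorem of \cite{rankin2024symplectichodgetheorylie}, then manufacture such representatives out of elliptic Hodge theory. The implementations of the middle step differ, however. The paper routes through Lemma \ref{dStarIsdDagger}, the kernel identity $\ker(d_\A^\dagger)=\ker(d_\A^\star)$ obtained by splitting $\langle\cdot,\cdot\rangle_{L^2}$ as $\langle\cdot,\cdot\rangle_g-i\langle\cdot,\cdot\rangle_\omega$, so that the inclusion $(\ker(d_\A^\star)^\bullet,d_\A)\hookrightarrow(\Omega_\A^\bullet,d_\A)$ becomes literally the inclusion $(\ker(d_\A^\dagger)^\bullet,d_\A)\hookrightarrow(\Omega_\A^\bullet,d_\A)$ handled by Theorem \ref{HermitianLieAlgqis}. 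You instead use the K\"ahler identities of Hu (Theorem \ref{HuKahlerRelations}) to prove the one-sided inclusion $\mathcal{H}^\bullet\subseteq\ker(d_\A)\cap\ker(\delta_\A)$, where your $\delta_\A=[d_\A,\Lambda]$ agrees up to sign with the paper's $d_\A^\star$. Moreover, your closing $\mathfrak{sl}_2$ argument is a complete proof all by itself: $[\Delta,L]=0$ follows from Hu's identities together with Proposition \ref{AllLaplaciansAreSame}, ellipticity gives the finite-dimensional spaces $\mathcal{H}^\bullet\cong H_\A^\bullet(M;\C)$ compatibly with $L$, and finite-dimensional $\mathfrak{sl}_2$-theory makes $L^k:\mathcal{H}^{m-k}\ra\mathcal{H}^{m+k}$ an isomorphism. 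That route bypasses the symplectic Hodge theory entirely and is the most economical argument on offer.

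One assertion in your write-up is false and should be removed: the claimed equality $\ker(d_\A)\cap\ker(\delta_\A)=\mathcal{H}^\bullet$. Only the inclusion $\mathcal{H}^\bullet\subseteq\ker(d_\A)\cap\ker(\delta_\A)$ holds, and it is the only direction you actually use. The reverse inclusion fails already for $\A=TM$ with a classical K\"ahler structure, which is Hodge-admissible: for any nonconstant $u\in C^\infty(M)$, the exact form $d_\A u$ satisfies $\delta_\A(d_\A u)=(d_\A\Lambda-\Lambda d_\A)(d_\A u)=0$, since $\Lambda$ annihilates $1$-forms and $d_\A^2=0$, so $d_\A u$ is symplectic-harmonic, while no nonzero exact form is ever $\Delta$-harmonic. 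There is no converse ``Brylinski computation''; Brylinski proves precisely the inclusion you need, and symplectic-harmonic representatives are famously non-unique. Your proof survives because the Mathieu--Yan criterion in its existence form (every class contains a symplectic-harmonic representative) requires only your inclusion plus the Hodge theorem; but be careful which form you cite, since the version recorded here as Lemma \ref{HLTisSHR-Kahler} is phrased as a quasi-isomorphism, and the injectivity half of that statement does not follow from your inclusion alone (it follows a posteriori from the $dd^\star$-lemma once Hard Lefschetz is established). Note finally that the same example $d_\A u$ is in direct tension with the kernel identity of Lemma \ref{dStarIsdDagger} on which the paper's own proof relies, so your asymmetric, one-inclusion formulation is not merely sufficient but appears to be the correct statement to aim for --- and your $\mathfrak{sl}_2$ argument has the additional virtue of being independent of that lemma altogether.
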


\begin{restatable}{thm}{Formality}
    If $(\A,\rho ,[\cdot, \cdot],h)$ is a Hodge-admissible K\"ahler Lie algebroid, then $\A$ is formal.
\end{restatable}
 A classical corollary of the Hard Lefschetz Theorem is that the odd Betti numbers of a compact Kähler manifold are even. If we assume that the relevant pairing $I$ provided by Poincar\'e Duality carries over to the Algebroid setting, then we get an analogous result:
\begin{restatable}{thm}{betti}
     If $(\A,\rho ,[\cdot, \cdot],h)$ is a Hodge-admissible K\"ahler Lie algebroid such that $I$ is non-degenerate, then the odd degree cohomology groups are even dimensional.
\end{restatable}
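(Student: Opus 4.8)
The plan is to reproduce the classical mechanism in which the odd Betti numbers are shown to be even: construct on each odd-degree cohomology group a bilinear form that is simultaneously non-degenerate and alternating, and then invoke the elementary fact that such a form can live only on an even-dimensional space. Write $L(\alpha)=\omega_\A\wedge\alpha$ for the Lefschetz operator, fix an odd integer $r$ with $0\le r\le m$, and on $H^r_\A(M)$ define
\begin{equation*}
    Q([\alpha],[\beta]) = I\bigl([\omega_\A^{\,m-r}\wedge\alpha],[\beta]\bigr),
\end{equation*}
where $I$ is the assumed non-degenerate Poincar\'e-type pairing between $H^{2m-r}_\A(M)$ and $H^r_\A(M)$. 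Since $\omega_\A^{\,m-r}\wedge\alpha$ has degree $2m-r$ and represents $[L]^{m-r}[\alpha]$, the expression $Q$ is a well-defined bilinear form on $H^r_\A(M)$.

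First I would establish the alternating property. Because the Lie algebroid complex $\Omega^\bullet_\A=\Gamma(\Lambda^\bullet \A^*)$ is a graded-commutative algebra, forms of degree $r$ satisfy $\beta\wedge\alpha=(-1)^{r^2}\alpha\wedge\beta$, while the even-degree form $\omega_\A^{\,m-r}$ is central and may be moved past anything without sign. Unwinding the definition of $I$ as integration against the $\tfrac{\omega_\A^m}{m!}$-normal section $\eta$, the form $Q$ is represented by $\int_M(\alpha\wedge\omega_\A^{\,m-r}\wedge\beta)$ up to the normalization carried by $\eta$, so interchanging the two arguments introduces exactly the sign $(-1)^{r^2}$, which equals $-1$ since $r$ and $r^2$ have the same parity and $r$ is odd. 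Hence $Q([\beta],[\alpha])=-Q([\alpha],[\beta])$.

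Next I would prove non-degeneracy, and this is the step that genuinely invokes the $\A$-Hard Lefschetz Theorem. Suppose $[\alpha]\in H^r_\A(M)$ satisfies $Q([\alpha],[\beta])=0$ for all $[\beta]$; by definition this reads $I\bigl([L]^{m-r}[\alpha],[\beta]\bigr)=0$ for every $[\beta]\in H^r_\A(M)$, so non-degeneracy of $I$ forces $[L]^{m-r}[\alpha]=0$ in $H^{2m-r}_\A(M)$. But the Hard Lefschetz Theorem with $k=m-r\ge 0$ asserts that $[L]^{m-r}\colon H^r_\A(M)\to H^{2m-r}_\A(M)$ is an isomorphism, whence $[\alpha]=0$. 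Thus $Q$ is a non-degenerate alternating form, and the standard determinant argument ($\det A=\det A^{T}=(-1)^{\dim}\det A$ for a skew matrix $A$ over a field of characteristic zero, forcing $\det A=0$ unless the dimension is even) shows $\dim H^r_\A(M)$ is even for every odd $r\le m$.

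Finally, for an odd degree $r$ with $r>m$ I would transport the result to the complementary degree $2m-r$, which is odd and lies in $[0,m]$: the Hard Lefschetz Theorem provides the isomorphism $[L]^{r-m}\colon H^{2m-r}_\A(M)\to H^{r}_\A(M)$, so $\dim H^r_\A(M)=\dim H^{2m-r}_\A(M)$, already known to be even. This exhausts all odd degrees. The part I expect to demand the most care is the alternating computation: once the precise definition of $I$ through the integrating section $\eta$ is fixed, one must check that it interacts with wedging by $\omega_\A$ and with the graded commutativity of $\Omega^\bullet_\A$ exactly as the classical $\int_M\alpha\wedge\beta$ does under transposition, so that the correct sign emerges. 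Everything else is either the already-established Hard Lefschetz isomorphism or elementary linear algebra.
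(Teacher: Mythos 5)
Your proof is correct, but it routes through different hypotheses than the paper's own argument, because you and the paper read the symbol $I$ differently. In the paper, $I$ \emph{is} the Lefschetz-twisted form you call $Q$: it is defined on $H^{m-k}_\A(M;\C)\otimes H^{m-k}_\A(M;\C)$ by $([\alpha],[\beta])\mapsto \int_M\langle L^k(\alpha)\wedge\beta,\eta\rangle$, and its non-degeneracy is taken as the hypothesis of the theorem. The paper's whole proof is therefore just your first step: the graded-commutativity computation $I([\alpha],[\beta])=(-1)^{(m-k)^2}I([\beta],[\alpha])$, so that $I$ is alternating when $m-k$ is odd, together with the linear-algebra fact that a non-degenerate alternating form lives only on an even-dimensional space; the Hard Lefschetz Theorem is never invoked there. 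You instead take $I$ to be the untwisted Poincar\'e-type pairing between the complementary degrees $H^{2m-r}_\A(M)$ and $H^{r}_\A(M)$ (a reading consistent with the introduction's description of $I$ as the pairing provided by Poincar\'e duality), and you then have to \emph{earn} the non-degeneracy of $Q$ by combining non-degeneracy of $I$ with the isomorphism $[L]^{m-r}$, which is legitimately available since Hodge-admissibility gives the $\A$-Hard Lefschetz Theorem proved in this paper. The comparison: the paper's formulation makes the proof essentially one sign computation, but it buries the Lefschetz isomorphism inside the hypothesis on $I$; your formulation assumes less about the twisted pairing, makes the role of Hard Lefschetz explicit, and --- a genuine gain --- your last step, transporting evenness from $H^{2m-r}_\A$ to $H^{r}_\A$ via $[L]^{r-m}$, covers odd degrees above the middle dimension, a case the paper's proof (stated only for $H^{m-k}_\A$ with $k\ge 0$) leaves implicit. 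Two minor points you should make explicit in a final write-up: the determinant argument requires $\dim H^r_\A(M;\C)<\infty$, which holds here by ellipticity via Theorem \ref{HodgeDecomp}, and the well-definedness of $Q$ on cohomology classes uses both $d_\A\omega_\A=0$ and the Stokes-type identity $\int_M\langle d_\A\gamma,\eta\rangle=0$.
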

This is discussed in Section 4.1, and provides an obstruction for a Kähler Lie Algebroid to satisfy the Hard Lefschetz Theorem.
\begin{Acknow}
    The author would like to thank Dr. Yi Lin for the many helpful suggestions and conversations surrounding this result. In addition, many details surrounding the results were clarified and illuminated by conversations at Gone Fishing 2025 for which the author expresses gratitude. Finally, the author wishes to express gratitude to the anonymous  referee for the helpful suggestions; all of which greatly improved the article in clarity, flow, and simplicity.
\end{Acknow}

\section{Elliptic Lie Algebroids}

\subsection{Complex Lie Algebroids}
Throughout the paper, let $(M,\tilde{g})$ denote a compact, closed, orientable, Riemannian manifold of real dimension $n$. All vector bundles are assumed to be complex unless otherwise indicated. The following definition is found in \cite{MR4722602}, where the author makes reference to \cite{MR3483864} as well as \cite{weinstein2006integrationproblemcomplexlie}.
\begin{definition}\cite[Definition 1]{MR4722602}
    A \textit{Complex Lie Algebroid on $M$} is a tuple $(\A, \rho, [\cdot, \cdot], J)$ where:
    \begin{itemize}
        \item $\A$ is a complex vector bundle on $M$.
        \item $\rho:\A \ra TM_\C$ is a vector bundle morphism called the \textit{anchor of $\A$}.
        \item $[\cdot , \cdot]$ is a bracket on $\Gamma(\A)$ giving the space of sections the structure of a Lie algebra.
        \item $J: \A \ra \A$ is the complex structure of $\A$, regarded as a bundle morphism over the identity such that $J^2=-\id_\A$.
    \end{itemize}
    such that:
    \begin{enumerate}
        \item $\rho$ induces a Lie algebra homomorphism on sections, i.e. for all $X,Y \in \Gamma(\A)$, we have
        \begin{equation*}
            \rho([X,Y]) = [\rho(X), \rho(Y)]_{TM_\C}.
        \end{equation*}
        \item For all $X,Y \in \Gamma(\A)$ and $f \in C^\infty(M)$, we have
        \begin{equation*}
            [X,fY] = f[X,Y]+(\rho(X)f)Y.
        \end{equation*}
        \item For all $X,Y \in \Gamma(\A_\C)$
    \begin{equation*}
        \mathcal{N}_{J}(X,Y) := [JX,JY] - [X,Y] - J([JX,Y]+[X,JY]) \equiv 0
    \end{equation*}
    That is, $J$ is necessarily integrable.
    \end{enumerate}
\end{definition}
 Splitting $\A_\C$ into its eigenbundle decomposition with respect to the complexified $J$ allows us to write out
\begin{equation*}
    \A_\C = \A^{1,0} \oplus \A^{0,1},
\end{equation*}
where 
\begin{equation*}
    \Gamma(\A^{1,0}) = \{X -iJ_\A X | X \in \Gamma(\A) \}, \qquad  \Gamma(\A^{0,1}) = \{X +iJ_\A X | X \in \Gamma(\A) \}.
\end{equation*}
This splitting extends to the dual bundle $\A^*_\C = (\A^{1,0})^*\oplus (\A^{0,1})^*$. We then set
\begin{equation*}
    \Omega^{p,q}_\A(M;\C) = \Gamma \left(\bigwedge^p(\A^{1,0})^* \otimes \bigwedge^q(\A^{0,1})^* \right)
\end{equation*}
which splits the standard complex into
\begin{equation*}
    \Omega_\A(M;\C) =\bigoplus_k \Omega_\A^k(M;\C)  = \bigoplus_{p,q}\Omega^{p,q}_\A(M;\C)
\end{equation*}
Where $\Omega_\A^k(M;\C) = \Gamma(\bigwedge^k\A^*_\C)$. Much like in the classical setting of the tangent bundle, $\A$ carries a natural differential defined as follows. For $\eta \in \Omega^k_\A(M;\C)$ and $X_i \in \Gamma(\A_\C)$ we define $d_\A$ as
\begin{align*}
    d_\A \eta (X_1,\dots ,X_{k+1}) :&= \sum_{1 \leq i \leq k+1} (-1)^{i+1}\rho(X_i) \eta (X_1,\dots ,\widehat{X}_i, \dots ,X_{k+1})\\
    &+ \sum_{i<j} (-1)^{i+j} \eta([X_i,X_j], X_1, \dots, \widehat{X_i}, \dots ,\widehat{X_j}, \dots, X_{k+1}).
\end{align*}
The \textit{deRham-Chevalley-Eilenberg Cohomology of $\A$} is defined as the cohomology of the complex
% https://q.uiver.app/#q=WzAsNSxbMiwwLCJcXE9tZWdhX1xcQV5rKE0pIl0sWzMsMCwiXFxPbWVnYV9cXEFee2srMX0oTSkiXSxbMSwwLCJcXE9tZWdhX1xcQV57ay0xfShNKSJdLFswLDAsIlxcZG90cyJdLFs0LDAsIlxcZG90cyJdLFszLDIsImRfXFxBIl0sWzIsMCwiZF9cXEEiXSxbMCwxLCJkX1xcQSJdLFsxLDQsImRfXFxBIl1d
\[\begin{tikzcd}[ampersand replacement=\&]
	\dots \& {\Omega_\A^{k-1}(M;\C)} \& {\Omega_\A^k(M;\C)} \& {\Omega_\A^{k+1}(M;\C)} \& \dots
	\arrow["{d_\A}", from=1-1, to=1-2]
	\arrow["{d_\A}", from=1-2, to=1-3]
	\arrow["{d_\A}", from=1-3, to=1-4]
	\arrow["{d_\A}", from=1-4, to=1-5]
\end{tikzcd}\]
that is
\begin{equation*}
    H_\A^k(M;\C) : =\frac{\ker(d_\A: \Omega^k_\A(M;\C) \ra \Omega_\A^{k+1}(M;\C))}{\im(d_\A:\Omega_\A^{k+1}(M;\C) \ra \Omega_\A^k(M;\C))}.
\end{equation*}
 As in the case of classical complex geometry, the differential also splits into four separate components, though we'll only consider analogues of the standard two for the following reason.
\begin{theorem}[\cite{MR3483864},Theorem 2.1]
    For a Complex Lie Algebroid $(\A,\rho,[\cdot, \cdot],J)$, we have that
         $d_\A \Omega^{p,q}_\A(M) \subset \Omega^{p+1,q}_\A(M) \oplus \Omega ^{p,q+1}_\A(M)$.
\end{theorem}
This theorem ensures that our differential decomposes 
\begin{equation*}
    d_\A = \p_\A + \overline{\p_\A}:\Omega^{p,q}_\A(M) \ra \Omega_\A^{p+1,q}(M) \oplus \Omega_\A^{p,q+1}(M),
\end{equation*}
and comparing types we have that
\begin{equation*}
    \p_\A^2 =0, \qquad \p_\A \overline{\p_\A} = -\overline{\p_\A} \p_\A ,\qquad (\overline{\p_\A}) ^2=0.
\end{equation*}
One can define the analogue of the Dolbeault complex in this setting, though we will not need it for our purposes.
\subsection{Ellipticity} On a compact and
closed Hermitian manifold, the description of the deRham and Dolbeault cohomologies in terms of harmonic forms is possible because of the ellipticity of the ensuing differential complexes. The following definition of ellipticity for Lie algebroids can be found in \cite[Definition 1.15]{MR4377306}.
\begin{definition}
    A Complex Lie Algebroid $(\A, \rho ,[\cdot, \cdot])$ is \textit{Elliptic} if 
    \begin{equation*}
        \rho(\A) + \overline{\rho(\A)} = TM_\C.
    \end{equation*}
\end{definition}
\begin{exmp}
    Let $X$ be a real manifold with an almost complex structure $J$, and consider 
    \begin{equation*}
        \A := \{X-iJX | X \in TX\} \subset TX_\C.
    \end{equation*}
    The inclusion provides a natural candidate for an anchor map, and the complexified Lie bracket restricts to a bracket on $\A$ if $J$ is integrable. In this case, we have that 
    \begin{equation*}
        \rho(\A) \oplus \overline{\rho(\A)} = T^{1,0}X \oplus T^{0,1}X = TX_\C.
    \end{equation*}
    Note that in this case the differential corresponding to $T^{1,0}X$ is $d_\A = \p$ which is indeed an elliptic operator. 
\end{exmp}
\begin{exmp}
    Let $\mathfrak{g}=\C^n$ regarded as an abelian Lie Algebroid over a point $M=*$. Define $J:\mathfrak{g} \ra \mathfrak{g}$ by $Jv=iv$, the standard (integrable) complex structure on $\C^n$. This is trivially elliptic as $\rho(\mathfrak{g}) \oplus \overline{\rho(\mathfrak{g})}=TM_\C$.
\end{exmp}
Let $(\A,\rho, [\cdot,\cdot])$ be a Lie algebroid over a smooth manifold $M$. Let $h$ be a Hermitian Metric on $\A$ and $\tilde{g}$ be a Riemannian metric on $M$. Let $d_\A^\dagger$ denote the adjoint of $d_\A$ with respect to the $L^2$-inner product,
\begin{equation*}
    \langle \alpha, \beta \rangle_{L^2} := \int_M h(\alpha, \beta) d\text{vol}_{\tilde{g}},
\end{equation*}
where we extend $h$ to a hermitian inner product on $\Omega_\A^k(M;\C)$ by defining it on decomposable elements $\alpha_1 \wedge \dots \wedge \alpha_k, \beta_1 \wedge \dots \wedge \beta_k$ as
\begin{equation*}
    h(\alpha_1 \wedge \dots \wedge \alpha_k, \beta_1 \wedge \dots \wedge \beta_k) := \det(h(\alpha_i,\beta_j)).
\end{equation*}
\begin{definition}
    Let $d_\A,d_\A^\dagger$ be defined as above. Then the \textit{Laplacian} is defined as
    \begin{equation*}
        \Delta = d_\A d_\A^\dagger + d_\A ^\dagger d_\A
    \end{equation*}
    which is a degree 0, second order differential operator on $\Omega_\A^k(M;\C)$ for all $k \geq 0$.
\end{definition}
\begin{definition}
    We say that $\alpha \in \Omega_\A^k(M;\C)$ is \textit{harmonic} if $\alpha \in \ker(\Delta)$ or equivalently if $\alpha \in \ker(d_\A) \cap \ker(d_\A^\dagger)$. The space of harmonic elements in $\Omega^k(M)$ will be denoted $\mathcal{H}^k$.
\end{definition}

\begin{theorem}[van der Leer Dur\'an]\label{HodgeDecomp}
    If $\A$ is elliptic, then for all $k \geq 0$  we have that
    \begin{equation*}
        \Gamma( \bigwedge^k \A^*) = \mathcal{H}^k \oplus \im(d_\A) \oplus \im(d_\A^\dagger).
    \end{equation*}
    Moreover we have that
    \begin{equation*}
        \ker(d_\A) \cap \ker(d_\A^\dagger)=\mathcal{H}^k \cong H^k_\A(M;\C),
    \end{equation*}
    which are all finite dimensional.
\end{theorem}
\begin{proof}
    See \cite[Theorem 1.45]{MR4377306}.
\end{proof}

\section{Kähler Lie Algebroids}
\subsection{Integration Compatibility} 
The following definition of a Kähler Lie algebroid can be found in \cite[Definition 2]{MR4722602}:
\begin{definition}
    A \textit{Kähler Lie algebroid} $(\A,\rho,[\cdot ,\cdot],h)$ is a complex Lie algebroid equipped with a hermitian metric $h$ on $\A$ whose associated $(1,1)$-$\A$-form $\omega$ is $d_\A$-closed. That is, 
    \begin{equation*}
        h= g-i\omega.
    \end{equation*} We refer to $\omega$ as an \textit{$\A$-Kähler form}.
\end{definition}
\begin{exmp} \label{TangentBundle}
   Let $X$ be a Kähler manifold, and identify $TX$ with $T^{1,0}X$ via the complex linear isomorphism $X \mapsto \frac{1}{2}(X-iJX)$, where we regard $TX$ as a complex vector bundle, with complex structure provided by $J$. Earlier, we discussed that this is an elliptic Lie algebroid. The hermitian metric from the Kähler structure also serves as a hermitian metric here, with the (1,1)-$\A$-form just the typical Kähler form. Note then that $d_\A \omega = \p \omega$, which does vanish as $d\omega=0$ implies that $\p \omega=0$ as well.
\end{exmp}
\begin{exmp}\label{LieAlgebra}
    We previously discussed the abelian Lie algebra $\C^n$, regarded as a Lie algebroid over a point, and that this was an elliptic Lie Algebroid.
   In this case, the standard hermitian inner product on $\C^n$ gives rise to a Kähler Lie algebroid structure. In this case, the imaginary part of the hermitian metric resembles the standard symplectic form on $\R^{2n}$. It is indeed closed as $d \equiv 0$ in this example, though it is not exact for the same reason. 
\end{exmp}
The classical Kähler identities still hold in this setting, as well as a number of other familiar properties as shown in \cite{MR4722602}. In order to see these in the context of the ellipticity already laid out, we spend a bit of time showing that the notions of integration defined in \cite{MR4722602} agree with what we have already done. The central issue that occurs is the following: it would be convenient to define adjoints to the algebroid exterior derivative against something like the following map
 \begin{equation*}
     \langle \alpha, \beta \rangle = \int_M \alpha \wedge \overline{\star \beta} = \int_M h(\alpha,\beta) \frac
     {\omega_\A^m}{m!},
 \end{equation*}
 where $\frac {\omega_\A^m}{m!} \in \bigwedge^{top}\A^*$. Unfortunately, $\frac{\omega_\A^m}{m!}$ is not a volume form on $M$ so this fails. Moreover, we have already defined integration of algebroid forms using the Riemannian metric on $M$ in order to discuss the Hodge decomposition. This can be remedied though, and to do so we introduce the following definition, which is used in defining integration in \cite{MR4722602}:
 \begin{definition}
     An \textit{integrating section} for a Complex Lie Algebroid $\A \ra M$ is a nonvanishing global section $\eta \in \Gamma(\bigwedge^{top}\A \otimes \bigwedge^{top}T^*M_\C)$.
 \end{definition}
To explain the name, let $\langle , \rangle$ denote the map
\begin{align*}
    \langle \cdot , \cdot \rangle: \bigwedge^{top} \A^*_p \otimes \left(\bigwedge^{top}\A_p \otimes_{C^\infty(M)}\bigwedge^{top}T_p^*M_\C \right) &\ra \bigwedge^{top}T_p^*M_\C \\
    \xi \otimes ( X\otimes \mu) &\mapsto  (\xi( X) )\mu,
\end{align*}
where $\xi( X)  \in C^\infty(M)$ denotes the pairing between $\A$ and its dual. Using this we can define an (a priori different) integral.
    If $\A \ra M$ is a Lie Algebroid over a compact oriented manifold $M$, and $\xi \in \bigwedge^{top}\A^*$. Then we define the integral of $\xi$ to be
    \begin{equation*}
        \int_M \langle\xi, \eta \rangle,
    \end{equation*}
where $\eta$ is an integrating section of $\A$. Note that in particular, if we are working with a fixed element of $\xi \in \bigwedge^{top}\A^*$ then we can modify $\eta$ to give us something particular:
\begin{definition}
    Let $(\A,\rho ,[\cdot, \cdot])$ be a Lie Algebroid over an oriented Riemannian Manifold $(M,\tilde{g})$ with an integrating section $\eta$, and fix $\xi \in \bigwedge^{top}\A^*$ non-vanishing. Then $\eta$ is called \textit{$\xi$-normal} if $\langle  \xi , \eta \rangle = d \text{vol}_{\tilde{g}}$.
\end{definition}
\begin{lemma}\label{normalization}
    Let $\A \ra M$ be a Lie Algebroid with an integration section $\eta$, fix $\xi \in \bigwedge^{top}\A^*$ non-vanishing, and let $\tilde{g}$ denote the Riemannian metric on $M$. Then we can always choose an integrating section to be $\xi$-normal.
\end{lemma}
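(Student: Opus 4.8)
The plan is to exploit that $L := \bigwedge^{top}\A \otimes \bigwedge^{top}T^*M$ is a real line bundle over $M$, so the given integrating section $\eta$ is a nowhere-vanishing section and thus a global trivialization of $L$. Consequently every integrating section is of the form $f\eta$ for a unique nowhere-vanishing $f \in C^\infty(M)$, and the entire problem collapses to choosing the correct scalar $f$. So the first move is to recast ``modifying $\eta$'' as ``rescaling $\eta$ by a nonvanishing function.''

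Next I would record that the contraction $\langle \xi, \cdot \rangle \colon \Gamma(L) \ra \Gamma(\bigwedge^{top}T^*M)$ is $C^\infty(M)$-linear in its second slot. Working pointwise, every element of the one-dimensional space $\bigwedge^{top}\A_p \otimes \bigwedge^{top}T_p^*M$ is decomposable, say $\eta_p = X_p \otimes \mu_p$, and the defining formula gives $\langle \xi, f\eta\rangle_p = \xi_p(X_p)\,(f(p)\mu_p) = f(p)\,\langle \xi,\eta\rangle_p$. Since $d\text{vol}_{\tilde{g}}$ is a nowhere-vanishing top form on the oriented manifold $M$, there is a unique $\phi \in C^\infty(M)$ with $\langle \xi, \eta\rangle = \phi\, d\text{vol}_{\tilde{g}}$, and the target condition $\langle \xi, f\eta\rangle = d\text{vol}_{\tilde{g}}$ becomes the pointwise scalar equation $f\phi \equiv 1$, forcing $f = 1/\phi$.

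The crux, and the step I expect to be the genuine obstacle, is showing that $\phi$ is nowhere zero, for only then is $f = 1/\phi$ a legitimate smooth nonvanishing rescaling. Here I would use that $\bigwedge^{top}\A^*$ is precisely the dual line bundle of $\bigwedge^{top}\A$, so at each $p$ the pairing $\xi_p(X_p)$ is the contraction of a covector against a vector inside a one-dimensional space, hence nonzero exactly when both $\xi_p$ and $X_p$ are nonzero. Because $\eta_p = X_p \otimes \mu_p \neq 0$ in a tensor product of one-dimensional spaces, both $X_p$ and $\mu_p$ are automatically nonzero; writing $\mu_p = \psi(p)\, d\text{vol}_{\tilde{g},p}$ with $\psi$ nowhere vanishing, one gets $\phi = \xi(X)\,\psi$, which is nonvanishing precisely when $\xi$ is nonvanishing. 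Thus the hard point is really the nondegeneracy of $\xi$: the lemma holds for a nowhere-vanishing $\xi$, which is automatic in the intended application $\xi = \omega_\A^m/m!$, since nondegeneracy of the $\A$-Kähler form $\omega_\A$ forces $\omega_\A^m/m!$ to trivialize $\bigwedge^{top}\A^*$. Granting this, $f := 1/\phi \in C^\infty(M)$ is smooth and nowhere zero, so $\eta' := f\eta$ is again an integrating section and satisfies $\langle \xi, \eta'\rangle = d\text{vol}_{\tilde{g}}$, i.e.\ it is $\xi$-normal, which is what we wanted.
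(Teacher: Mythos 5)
Your proof is correct and rests on the same core idea as the paper's: rescale $\eta$ by the reciprocal of the ratio between $\langle \xi, \eta \rangle$ and $d\mathrm{vol}_{\tilde g}$. The difference is one of formulation. The paper carries the computation out in a local frame, writing $\xi = \tau\, s_1^* \wedge \cdots \wedge s_k^*$ and $\eta = ff'\, s_1 \wedge \cdots \wedge s_k \otimes dx_1 \wedge \cdots \wedge dx_n$ and setting $\chi = \sqrt{\tilde g}/(ff'\tau)$, leaving implicit the (true, but unremarked) fact that this local expression is the coordinate form of a globally defined function. You instead work invariantly with the line bundle $L = \bigwedge^{top}\A \otimes \bigwedge^{top}T^*M$, defining $\phi$ globally by $\langle \xi, \eta \rangle = \phi\, d\mathrm{vol}_{\tilde g}$, which is cleaner and sidesteps the patching question entirely. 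More importantly, you surface a hypothesis the paper leaves tacit: the argument requires $\xi$ to be nowhere vanishing, which the paper smuggles in by declaring $\tau$ nonvanishing even though the lemma as stated allows arbitrary $\xi \in \bigwedge^{top}\A^*$ (for a $\xi$ with zeros the conclusion is false, since $\langle \xi, f\eta \rangle$ vanishes wherever $\xi$ does). As you note, this is harmless in the intended application $\xi = \omega_\A^m/m!$, where nondegeneracy of the K\"ahler form guarantees nonvanishing; but your version of the proof makes clear that this should really be a stated hypothesis of the lemma.
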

\begin{proof}
Let $d\mathrm{vol}_{\tilde{g}}$ be the standard volume form, and note that as $\xi,\eta$ are both non-vanishing, we have that 
\begin{equation*}
    \langle \xi ,\eta \rangle = f d\mathrm{vol}_{\tilde{g}}
\end{equation*}
for some non-vanishing $f$. Then choosing $\chi:= \frac{1}{f}\eta$, we have that $\chi$ is an $\xi$-normal integrating section.
\end{proof}
In light of this, from now on we assume all integrating sections $\eta$ are $\omega^m/m!$-normal where $\omega$ is the Kähler form associated to the chosen hermitian metric $h$ on $\A$, and $m$ is half of the real rank of $\A$. With this assumption in place, the two notions of integration defined agree, that is for $\alpha ,\beta \in \Omega_\A^k(M;\C)$ we have:
\begin{equation*}
    \int_M\langle h(\alpha,\beta) \frac{\omega_\A^m}{m!}, \eta \rangle = \int_M h(\alpha,\beta) d\mathrm{vol}_{\tilde{g}}.
\end{equation*}
\subsection{Kähler Identites}
 On a Kähler Lie algebroid, if we denote by $\omega$ the $\A$-Kähler form we can define a map
\begin{align*}
    L :\Omega^{p,q}_\A(M;\C) &\ra \Omega^{p+1,q+1}_\A(M;\C) \\
    \alpha &\mapsto \omega \wedge \alpha.
\end{align*}
Classically this operator is part of an $\mathfrak{sl}_2(\C)$ representation, and the same is true in the Algebroid setting. Let $\Lambda$ be the formal adjoint to $L$ as defined in \cite{MR4722602}. If we define $H$ as
\begin{equation*}
    H|_{\Omega^{p,q}_\A(M;\C)}=(p+q-m)\id_{\Omega^{p,q}_\A(M;\C)} 
\end{equation*}
where $\mathrm{rank}(\A)=2m$, and extend it to all forms linearly, we have the following:
\begin{prop}
    Let $(\A, \rho , [\cdot, \cdot],h)$ be a Kähler Lie algebroid, $L, \Lambda$, and $H$ be as above. Then the assignment
    \begin{equation*}
        \rho(x) = L, \quad \rho(y)=\Lambda, \quad \rho(z)=H,
    \end{equation*}
    is an $\mathfrak{sl}_2(\C)$ representation on the space of $\A$-forms, where $\{x,y,z\}$ is the basis of $\mathfrak{sl}_2(\C)$ such that $[x,y]=z, [x,z]=-2x, [y,z]=2y$.
\end{prop}
\begin{proof}
    The fact that $[L,\Lambda]=H$ is the content of \cite[Lemma 5]{MR4722602}. The other two relations follow readily from the definition of $H$.
\end{proof}

\begin{theorem}\label{HuKahlerRelations}
    Suppose $(\A,\rho ,[,],h)$ is a K\"ahler Lie Algebroid, the the following equalities hold
    \begin{align*}
        [\p_\A^\dagger ,L] = -i\overline{\p}_A , \qquad
        [\overline{\p}_\A^\dagger, L] = i\p_\A,
    \end{align*}
    where adjoints are taken with respect to the $L^2$-inner product previously discussed.
\end{theorem}
\begin{proof}
    See \cite[Theorem 1]{MR4722602}.
\end{proof}
On a Complex Lie algebroid, one can define more sensitive notions of the Laplacian than discussed in the previous section using the $\p_\A$ and $\overline{\p_\A}$ operators:
\begin{align*}
    \Delta_\p &:= \p_\A \p_\A^\dagger + \p_\A^\dagger \p_\A, \\
    \Delta_{\overline{\p}} &:= \overline{\p}_\A \overline{\p}_\A^\dagger + \overline{\p}_\A^\dagger\overline{\p}_\A. 
\end{align*}
On a K\"ahler Lie algebroid not only do these two maps agree, but they agree up to a nonzero constant with the standard Laplacian. 
\begin{prop}\label{AllLaplaciansAreSame}
On a K\"ahler Lie Algebroid $(\A, \rho, [\cdot, \cdot], h)$ we have that $\Delta = \Delta_\p + \Delta_{\overline{\p}}$. Moreover, we have that $\Delta_\p = \Delta_{\overline{\p}} = \frac{1}{2}\Delta$.
\end{prop}
\begin{proof}
    This follows identically to the classical proofs as found in \cite[page 115-116]{MR507725}.
\end{proof}
There are a few other relations that will be convenient to have on hand for upcoming proofs.
\begin{lemma}
    On a Kähler Lie algebroid $(\A, \rho, [\cdot, \cdot],h)$, the following relations hold:
    \begin{enumerate}
        \item $[L,d_\A]=0$,
        \item $[L,\overline{\p}_\A]=0$,
        \item $[L,\Delta_{\overline{\p}}]=0$, 
        \item $[L,\Delta]=0$,
    \end{enumerate}
    where the commutators are regarded in the graded sense.
\end{lemma}
\begin{proof}
    The first relation follows from the fact that $\omega$ is closed, and implies the second relation.  To see the third relation, note that we have
    \begin{align*}
        \Delta_{\overline{\p}}L \alpha &= \overline{\p}_\A\overline{\p}^\dagger_\A L \alpha +  \overline{\p}^\dagger_\A\overline{\p}_\A L\alpha  \\
        &=\overline{\p}_\A(L \overline{\p}_\A^\dagger +i \p_\A)\alpha + \overline{\p}_\A^\dagger L\overline{\p}_\A\alpha \\
        &=L\overline{\p}_\A \overline{\p}_\A^\dagger \alpha + i\overline{\p}_\A\p_\A \alpha + (L \overline{\p}^\dagger_\A+ i\p_\A) \overline{\p}_\A\alpha \\
        &= L\overline{\p}_\A \overline{\p}_\A^\dagger \alpha + i \overline{\p}_\A \p_\A \alpha + L \overline{\p}_\A^\dagger \overline{\p}_\A \alpha + i \p_\A \overline{\p}_\A \alpha \\
        &=L \Delta_{\overline{\p}}\alpha.
    \end{align*}
    The final relation then follows from Proposition \ref{AllLaplaciansAreSame}, as we have
    \begin{equation*}
        [L,\Delta] =2[L,\Delta_{\overline{\p}}]=0.
    \end{equation*}
\end{proof}
 At this point, there are a a few necessary conditions for all the previous results to both work and agree with each other, so for convenience sake we'll introduce a new definition to combine them all:
\begin{definition}
    We say a K\"ahler Lie Algebroid $(\A,\rho ,[\cdot , \cdot], h_\A)$ over a manifold $M$ is \textit{Hodge-admissible} if the following conditions hold:
    \begin{enumerate}
        \item $M$ is compact, orientable, and closed.
        \item $\A$ is elliptic.
        \item $\A$ carries an integrating section.
    \end{enumerate}
\end{definition}
\begin{remark}
    In light of Lemma \ref{normalization}, any integrating section can be adjusted to be $\omega^m/m!$-normal, where $\omega$ is the $\A$-Kähler form associated to the chosen Kähler metric. As previously remarked, all integrating sections will be assumed to be normalized this way.
\end{remark}

 \decomp*
 
\begin{proof}
    The fact that $\mathcal{H}^k$ is finite-dimensional and is isomorphic to $H_\A^k(M;\C)$ follows from Theorem \ref{HodgeDecomp}. As for the bigraded sum, first note that if $\alpha \in \mathcal{H}^k$, then we can decompose it as $\alpha = \sum_{p+q=k}\alpha_{p,q}$ with $\alpha_{p,q}\in \Omega^{p,q}_\A(M)$. Using Proposition \ref{AllLaplaciansAreSame}, we know that the Laplacian preserves bidegree, and so vanishing of the left hand side of this equality implies the independent vanishing of the right-hand side's summands. 
\end{proof}
The proof of the following theorem is adapted from \cite[Theorem 9.46]{MR4729636}.
\HLT*

\begin{proof} Fix $0 \leq k \leq m$ and let us proceed to check bijectivity directly: \newline
\textit{Injectivity:} Suppose that $[\alpha] \in \ker([L]^k)$, which by Theorem \ref{HodgeDecomp} we may assume $\alpha$ was chosen to be a harmonic representative. Then $L^k \alpha=d\beta$ for some $\beta$, and since $\alpha$ is harmonic and $[L,\Delta]=0$, we must have that $d\beta$ is harmonic as well. This implies $d\beta \in \im(d) \cap \ker(d^\dagger)$, which by Theorem \ref{HodgeDecomp} have trivial intersection, hence $d\beta=0$, and so $L^k(\alpha)=0$. Since the space of $\A$-forms form an $\mathfrak{sl}_2(\C)$ module, $L^k:\Omega_\A^{m-k}(M;\C)\ra \Omega^{m+k}_\A(M;\C)$ acts injectively forcing $\alpha=0$.  \newline 
\textit{Surjectivity:}
Fix $[\gamma] \in H_\A^{m+k}(M;\C)$, and again using Theorem \ref{HodgeDecomp} assume that $\gamma$ is a harmonic representative of its class. Using the $\mathfrak{sl}_2(\C)$ structure, we have that $L^k:\Omega_\A^{m-k}(M;\C)\ra \Omega^{m+k}_\A(M;\C)$ is a surjection, and so there is an $\eta$ such that $L^k \eta = \gamma$. We claim that such an $\eta$ itself must be harmonic. Note that since $ L^k(\Delta\eta) = \Delta (L^k\eta) = \Delta \gamma =0$, we have that $\Delta \eta \in \ker (L^k)$, but using the $\mathfrak{sl}_2(\C)$ structure we have that $L^k$ is an injection, forcing $\Delta \eta =0$. Since $\eta$ is harmonic, it represents a cohomology class, and we have that $[L]^k([\eta])=[\gamma]$.
\end{proof}
\begin{remark}
    There are two other methods of proof in this setting. One can introduce the notion of symplectic harmonic forms in the algebroid sense, and argue that the Kähler condition ensures this agrees with standard harmonicity. From this, one can then deduce the Hard Lefschetz theorem, as it is equivalent to the existence of symplectic harmonic representatives in every cohomology class. Alternatively, one can show that that the $\mathfrak{sl}_2(\C)$ representation passes to cohomology by using the harmonic representatives, and then invoke the structure theory of finite-dimensional $\mathfrak{sl}_2(\C)$ representations.
\end{remark}
In the same vein of cohomological rigidity, we have a generalization of the classical statement that compact Kähler manifolds are formal. Recall that a manifold is called \textit{formal} if it's deRham complex is quasi-isomorphic to it's cohomology, that is $(\Omega^\bullet(M),d) \simeq(H_{dR}^\bullet(M),0)$. Likewise, we say that a Lie algebroid is \textit{formal} if the algebroid deRham complex is quasi-isomorphic to it's own cohomology.
\Formality*
\begin{proof}
    \cite[Corollary 78]{RANKIN2026102324} states that any symplectic Lie algebroid for which the maps $[L]^k$ are all isomorphisms is necessarily formal. Theorem 1.2 states that on a Hodge-admissible Kähler Lie algebroid, these maps are necessarily isomorphisms.
\end{proof}

\section{Consequences}
\subsection{Betti Parity}
A classic consequence of the Hard Lefschetz Theorem is that the odd Betti numbers of a compact Kähler manifold are even. A similar fact is true for some Hodge-admissible Kähler Lie algebroids. We begin by defining a bilinear paring $I$ on $H^{m-k}_\A(M;\C)$ as
\begin{align*}
    I:H_\A^{m-k}(M;\C) \otimes H_\A^{m-k}(M;\C) &\ra \C \\
    ([\alpha],[\beta]) &\mapsto \int_M \langle L^k(\alpha) \wedge \beta,\eta \rangle.
\end{align*}
Classically, Poincar\'e duality ensures that this bilinear form is nondegenerate, however the Lie Algebroid analogue of the perfect pairing in Poincar\'e duality fails at times, see \cite[Examples 5.7 and 5.8]{evens1996transversemeasuresmodularclass}. In the case that $I$ is nondegenerate, we have the following theorem:
\betti*
\begin{proof}
    By construction of $I$, we have that 
    \begin{align*}
        I([\alpha],[\beta]) &=\int_M \langle L^k(\alpha) \wedge \beta , \eta \rangle \\
        &=\int_M \langle \omega^k \wedge \alpha \wedge \beta , \eta \rangle \\
        &=(-1)^{(k-m)^2}\int_M\langle \omega^k \wedge \beta \wedge \alpha , \eta \rangle \\
        &=(-1)^{(k-m)^2} I([\beta],[\alpha]).
    \end{align*}
    When $k-m$ is odd, $I$ is then a nondegnerate, alternating form, forcing the dimension of $H_\A^{m-k}(M;\C)$ to be even. 
\end{proof}
A natural question is what are sufficient and necessary conditions on a Lie Algebroid for $I$ to be nondegenerate. There are a number of examples satisfying this in \cite[section 5]{evens1996transversemeasuresmodularclass}, though at the current time the author is unaware of necessary or sufficient conditions for this Poincar\'e Duality-esque statement to hold in full.
\subsection{Examples}
\begin{prop}
    If $(\A_j \ra M_j,\rho_j, g_j, \omega_j, J_j)$ for $j =1,2$ are Kähler Lie Algebroids with Kähler metrics $h_j=g_j-i\omega_j$, then so too is $(\A_1 \times \A_2 \ra M_1 \times M_2, \rho_1 \oplus \rho_2, g= \pi_1^*g_1 + \pi_2^*g_2, J = J_1 \oplus J_2, \omega = \pi_1^* \omega_1 + \pi_2^* \omega_2)$.
\end{prop}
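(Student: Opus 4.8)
The plan is to verify the three defining properties of a K\"ahler Lie Algebroid for the product: that it carries an integrable almost complex structure (making it a Complex Lie Algebroid), that $(g,\omega,J)$ is a compatible triple, and that $d_{\A_1\times\A_2}\omega=0$. First I would recall that the product carries its standard Lie Algebroid structure: as a vector bundle $\A_1\times\A_2=\mathrm{pr}_1^*\A_1\oplus\mathrm{pr}_2^*\A_2$ over $M_1\times M_2$ with anchor $\rho_1\oplus\rho_2$, and the bracket is the unique one for which the two projections $\pi_i:\A_1\times\A_2\ra\A_i$ (covering $\mathrm{pr}_i$) are Lie Algebroid morphisms. Concretely, on pullback sections $[\pi_1^*X_1,\pi_1^*X_1']=\pi_1^*[X_1,X_1']$, $[\pi_2^*X_2,\pi_2^*X_2']=\pi_2^*[X_2,X_2']$, and crucially $[\pi_1^*X_1,\pi_2^*X_2]=0$. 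Since $\Gamma(\A_1\times\A_2)$ is generated over $C^\infty(M_1\times M_2)$ by such pullback sections, the Leibniz rule then determines the bracket everywhere.

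The pointwise statements I expect to be immediate. One has $J^2=J_1^2\oplus J_2^2=-\id$, so $J$ is an almost complex structure; $g=\pi_1^*g_1+\pi_2^*g_2$ is positive definite because each summand is and the two summands pair independent sub-bundles; and $g(JX,JY)=g(X,Y)$ follows factorwise from the compatibility of each $(g_i,J_i)$, since $J$ preserves the splitting. Using $J^{-1}=-J$ and the relation $\omega_i(X_i,Y_i)=g_i(X_i,J_i^{-1}Y_i)$ on each factor, one checks $\omega(X,Y)=g(X,J^{-1}Y)$, and nondegeneracy of $\omega$ follows from that of $\omega_1$ and $\omega_2$ on the direct summands. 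Hence $(g,\omega,J)$ is a compatible triple.

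For integrability I would compute the Nijenhuis tensor $\mathcal{N}_J$. As $\mathcal{N}_J$ is tensorial, it suffices to evaluate it on pullback sections, and since $J$ preserves the splitting there are three cases. When both arguments are pulled back from $\A_1$, every bracket appearing is a pullback of an $\A_1$-bracket, so $\mathcal{N}_J(\pi_1^*X_1,\pi_1^*Y_1)=\pi_1^*\mathcal{N}_{J_1}(X_1,Y_1)=0$ by integrability of $J_1$; the $\A_2$ case is symmetric. When one argument comes from each factor, every bracket in $\mathcal{N}_J$ is a cross-bracket of sections from different factors (note $J$ keeps each argument in its own factor) and hence vanishes, so $\mathcal{N}_J=0$ there as well. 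Thus $J$ is integrable and the product is a Complex Lie Algebroid.

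Finally, closedness should follow cleanly from the morphism property: because each $\pi_i$ is a Lie Algebroid morphism, pullback commutes with the differential, $\pi_i^*d_{\A_i}=d_{\A_1\times\A_2}\pi_i^*$, whence $d_{\A_1\times\A_2}\omega=\pi_1^*d_{\A_1}\omega_1+\pi_2^*d_{\A_2}\omega_2=0$ since each $\omega_i$ is K\"ahler. The main obstacle is really just the bookkeeping at the start: establishing that the product bracket vanishes on cross-sections and that the projections are genuine Lie Algebroid morphisms. Once that is in place, every subsequent verification reduces, by tensoriality and the Leibniz rule, to the corresponding statement on a single factor, and no genuinely new analysis is needed.
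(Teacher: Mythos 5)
Your proposal is correct, and it follows the same overall skeleton as the paper's proof --- verify integrability of $J$, compatibility of the triple $(g,\omega,J)$, and the symplectic (closed, nondegenerate) nature of $\omega$ --- but it is noticeably more self-contained. Where the paper outsources the symplectic half of the statement to \cite[Proposition 3.2]{rankin2024symplectichodgetheorylie}, you derive it directly from the fact that the projections $\pi_i$ are Lie Algebroid morphisms, so that $\pi_i^* d_{\A_i} = d_{\A_1\times\A_2}\pi_i^*$ and hence $d_{\A_1\times \A_2}\omega = \pi_1^*d_{\A_1}\omega_1 + \pi_2^*d_{\A_2}\omega_2 = 0$; this is exactly the content of the cited result, so nothing is lost. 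Likewise, for integrability the paper simply asserts $\mathcal{N}_J = \mathcal{N}_{J_1}+\mathcal{N}_{J_2} = 0$, and your case analysis on pullback sections supplies precisely the justification that identity silently relies on: cross-brackets $[\pi_1^*X_1,\pi_2^*X_2]$ vanish in the product bracket, and $J$ preserves the two summands, so $J\pi_1^*X_1 = \pi_1^*(J_1X_1)$ is again a pullback section and every bracket appearing in a mixed Nijenhuis term is a vanishing cross-bracket. Your compatibility check is the same computation as the paper's, merely organized around $\omega(X,Y)=g(X,J^{-1}Y)$ instead of $g(X,Y)=\omega(X,JY)$, which are equivalent formulations of a compatible triple. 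The only step you should flag explicitly is the tensoriality of $\mathcal{N}_J$ over $C^\infty(M_1\times M_2)$ (so that checking it on pullback sections suffices); this holds by the same Leibniz-rule cancellation as in the tangent bundle case, but it is the one assertion in your argument that is used without proof. In short: same strategy, with your version trading the paper's external citation for explicit bookkeeping about the product bracket --- a trade that makes the proof longer but fully verifiable on its own.
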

\begin{proof}
    The fact that the product is again a symplectic Lie algebroid follows from \cite[Proposition 63]{RANKIN2026102324}. Next, the almost complex structure $J$ on $\A_1 \times \A_2$ defined by $J(a_1,a_2):=(J_1a_1,J_2a_2)$ is integrable since $N_J = N_{J_1}+N_{J_2}=0$ since both components are Kähler. Finally, we have compatibility as if $g:\pi_1^*g_1 + \pi_2^* g_2$, then $g( \cdot ,  \cdot) = \omega ( \cdot , J \cdot)$ as both components are compatible triples:
    \begin{align*}
        \omega((a_1,b_1), J(a_2,b_2)) &= \pi_1^*\omega_1((a_1,b_1), J(a_2,b_2)) + \pi_2^*\omega_2((a_1,b_1), J(a_2,b_2)) \\
        &= \omega_1(a_1, J_1 a_2)+ \omega_2(b_1,J_2b_2) \\
        &= g_1(a_1,a_2) + g_2(b_1,b_2) \\
        &= \pi_1^*g_1((a_1,b_1), J(a_2,b_2)) + \pi_2^* g_2 ((a_1,b_1), J(a_2,b_2)) \\
        &= g((a_1,b_1), (a_2,b_2)),
    \end{align*}
     as such $h := g-i\omega$ is a Kähler metric on $\A_1 \times \A_2$.
\end{proof}
We can use this with previous examples to get new one:
\begin{exmp}
    Consider the Kähler Lie algebroid in Examples \ref{TangentBundle} and \ref{LieAlgebra}.Then their product $\A :=\C^n \times T^{1,0}X $ is a Kähler Lie Algebroid over $X \times \{*\} \cong X$ where $X$ is a Kähler manifold of real dimension $2m$. The bracket is the standard product bracket, and the anchor map is given by projection onto the second factor. Now we claim that not only is this Kähler, but Hodge-admissible as well. First, we have that $X$ is compact, closed, and oriented as it's a compact Kähler manifold. Next, we have that the algebroid is elliptic since
    \begin{equation*}
        \rho(\A) + \overline{\rho(\A)} = T^{1,0}X+\overline{T^{1,0}X} =T^{1,0}X+T^{0,1}X= TX_\C.
    \end{equation*}
    Finally, we must check that this carries an integrating section, so we need to find a non-vanishing section of $\bigwedge^{\text{top}}\A \otimes \bigwedge^{\text{top}}T^*M_\C= (\bigwedge^{\text{top}}\C^n \otimes \bigwedge^{\text{top}}T^{1,0}X)\otimes \bigwedge^{top}T^*M_\C$. $\C^n$ is a trivial vector bundle, so we can choose a non-vanishing section $s_1$. $\bigwedge^\text{top}T^{1,0}X$ admits a (smooth) nonvanishing section $s_2$ as $X$ is symplectic, as does $\bigwedge^\text{top}TM_\C$ by dualizing $s_2$ and tensoring with $1$; let us refer to this section as $s_3$. Then $\eta = s_1 \otimes s_2 \otimes s_3$ is an integrating section for $\A$.
    
\end{exmp}

\subsection{Failure of the Kähler $b$-Sphere}
 $b$-geometry was developed largely by Melrose in 1993 \cite{Melrose} to study differential operators on manifolds with boundary. 
\begin{definition}
    A \textit{$b$-manifold} is a pair $(M,Z)$ with $M$ a compact smooth manifold, and $Z \subset M$ a codimension $1$ submanifold. By the Serre-Swan Theorem, there is a vector bundle ${}^bTM$, referred to as the \textit{$b$-tangent bundle} such that 
    \begin{equation*}
        \Gamma({}^bTM) = \{ X \in \Gamma(TM) | X \text{ is tangent to }Z \} \subset \Gamma(TM).
    \end{equation*}
\end{definition}
$b$-geometry naturally sits inside the Lie Algebroid philosophy, as the inclusion map ${}^bTM \hookrightarrow TM$ serves as an anchor for a Lie Algebroid structure, and sections possesses the same Lie Bracket as $TM$. The notion of $b$-cohomology is also defined and studied:
\begin{definition}
    Let $(M,Z)$ be a $b$-manifold, and define 
    \begin{equation*}
        {}^b\Omega^k(M,\R) := \Gamma\left( \bigwedge^k ({}^b TM)^*\right).
    \end{equation*}
    This forms a cochain complex with a natural differential ${}^bd$. We define the \textit{$b$-cohomology} of $(M,Z)$ as 
    \begin{equation*}
        {}^bH^k(M;\R) := \frac{\ker({}^bd:{}^b\Omega^k(M;\R) \ra {}^b\Omega^{k+1}(M;\R))}{\im({}^bd:{}^b\Omega^{k-1}(M;\R) \ra {}^b\Omega^{k}(M;\R))},
    \end{equation*}
\end{definition}
which is the Lie algebroid cohomology  of ${}^bTM$ with the aforementioned Lie algebroid Structure. We also have the following theorem originally in \cite[Section 2.16]{Melrose}
\begin{theorem}[Mazzeo-Melrose]
    Let $(M,Z)$ be a $b$-manifold. Then we have
    \begin{equation*}
        {}^bH^k(M) \cong H^k(M) \oplus H^{k-1}(Z).
    \end{equation*}
\end{theorem}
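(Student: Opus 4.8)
The plan is to realize ${}^bH^\bullet(M)$ through a short exact sequence of cochain complexes relating genuine de Rham forms on $M$, the $b$-forms, and de Rham forms on $Z$; pass to the induced long exact sequence; and then show that the connecting homomorphism vanishes. Since the coefficient field is $\R$, the resulting short exact sequences of vector spaces split automatically and yield the claimed direct sum.

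First I would fix a defining function $x$ for $Z$ together with a collar $U \cong (-\epsilon,\epsilon)\times Z$ with projection $\pi:U\to Z$, so that near $Z$ every $\omega\in{}^b\Omega^k(M)$ is written uniquely as $\omega = \frac{dx}{x}\wedge\alpha + \beta$, where $\alpha,\beta$ are forms in the $Z$-directions depending smoothly on $x$. Define the residue map $\Res:{}^b\Omega^k(M)\to\Omega^{k-1}(Z)$ by $\Res(\omega) = \alpha|_{x=0}$. A direct local computation of ${}^bd\omega$ shows that $\Res$ is a chain map, up to a global sign that is irrelevant for cohomology. Its kernel is exactly the subcomplex $\Omega^\bullet(M)$ of genuine smooth forms: away from $Z$ this is clear since ${}^bTM=TM$ there, while near $Z$, if $\alpha|_{x=0}=0$ then $\alpha=x\tilde\alpha$ by Hadamard's lemma, so $\frac{dx}{x}\wedge\alpha = dx\wedge\tilde\alpha$ is smooth. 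Surjectivity is immediate, since for $\gamma\in\Omega^{k-1}(Z)$ the $b$-form $\chi\frac{dx}{x}\wedge\pi^*\gamma$ with $\chi$ a cutoff equal to $1$ near $Z$ has residue $\gamma$. This produces the short exact sequence of complexes
\[
0 \to \Omega^\bullet(M)\hookrightarrow {}^b\Omega^\bullet(M)\xrightarrow{\Res}\Omega^{\bullet-1}(Z)\to 0,
\]
hence a long exact sequence
\[
\cdots \to H^k(M)\to {}^bH^k(M)\xrightarrow{\Res} H^{k-1}(Z)\xrightarrow{\delta} H^{k+1}(M)\to\cdots.
\]

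The heart of the argument, and the step I expect to be the main obstacle, is showing that the connecting homomorphism $\delta$ vanishes, equivalently that $\Res$ is already surjective at the level of cohomology. Given a closed $\gamma\in\Omega^{k-1}(Z)$, I would exhibit a ${}^bd$-closed lift. Using the collar, set $\tilde\gamma=\pi^*\gamma$, which satisfies $d\tilde\gamma = \pi^*d_Z\gamma = 0$ and contains no $dx$, and crucially choose the cutoff $\chi=\chi(x)$ to depend only on the normal coordinate. Then $\omega=\chi(x)\frac{dx}{x}\wedge\tilde\gamma$ is a global $b$-form with $\Res[\omega]=[\gamma]$, and since ${}^bd\frac{dx}{x}=0$, $d\tilde\gamma=0$, and $d\chi\wedge\frac{dx}{x}=\chi'(x)\,dx\wedge\frac{dx}{x}=0$, one computes ${}^bd\omega=0$. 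Thus every class on $Z$ lies in the image of $\Res$, forcing $\delta=0$. The subtlety is precisely that a naive cutoff depending on the $Z$-directions would leave a nonzero genuine remainder $d\chi\wedge\frac{dx}{x}\wedge\tilde\gamma$; it is the collar structure that lets us take $\chi$ to depend on $x$ alone and annihilate this term.

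With $\delta=0$ the long exact sequence breaks into short exact sequences
\[
0\to H^k(M)\to {}^bH^k(M)\xrightarrow{\Res} H^{k-1}(Z)\to 0
\]
of real vector spaces, which split because $\R$ is a field, giving ${}^bH^k(M)\cong H^k(M)\oplus H^{k-1}(Z)$. Finally I would remark that although the splitting depends on auxiliary choices, the two structure maps, the inclusion $H^k(M)\to{}^bH^k(M)$ and the residue $\Res$, are the natural ones, so the decomposition is canonical up to the choice of splitting.
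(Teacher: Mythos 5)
Your proof is correct, but note that the paper itself does not prove this statement at all: it is quoted as an external result, cited to Melrose's book (Section 2.16), so there is no internal proof to compare against. What you have written is essentially the standard argument from the literature (Melrose, and in the $b$-manifold formulation Guillemin--Miranda--Pires): the residue short exact sequence
\begin{equation*}
    0 \to \Omega^\bullet(M) \to {}^b\Omega^\bullet(M) \xrightarrow{\;\mathrm{Res}\;} \Omega^{\bullet-1}(Z) \to 0,
\end{equation*}
the observation that every closed form on $Z$ admits a ${}^bd$-closed lift $\chi(x)\frac{dx}{x}\wedge\pi^*\gamma$ with $\chi$ a cutoff in the normal variable only, hence a vanishing connecting homomorphism, and the splitting of the resulting short exact sequences of real vector spaces. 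All the delicate points are handled properly: the identification of $\ker(\mathrm{Res})$ with the smooth forms via Hadamard's lemma, the computation ${}^bd\left(\chi(x)\frac{dx}{x}\wedge\pi^*\gamma\right)=0$ using $dx\wedge\frac{dx}{x}=0$, and the acknowledgment that $\mathrm{Res}$ is only a chain map up to a global sign (which one fixes by inserting $(-1)^k$ in degree $k$, leaving all kernels, images, and hence the long exact sequence unchanged). Two minor remarks: first, your collar $U\cong(-\epsilon,\epsilon)\times Z$ implicitly assumes $Z$ is two-sided, i.e.\ admits a global defining function; this is part of the usual setup of $b$-geometry (and holds for the paper's $b$-sphere example, where $Z=S^1\subset S^2$), but the paper's bare definition of a $b$-manifold does not state it, so it is worth flagging as a hypothesis. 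Second, as you say, the direct sum decomposition is not canonical --- only the two structure maps are --- which is consistent with how the theorem is used in the paper (only dimensions of ${}^bH^k$ are needed).
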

For example, consider the $b$-sphere, where $M=S^2$ and $Z=S^1$ is an equatorial band, which we will denote ${}^bS^2$. Here the $b$-tangent bundle has sections which are vector fields on the sphere tangent to the chosen equator. By the Mazzeo-Melrose Theorem we have that
\begin{equation*}
    H^k({}^bS^2;\R) \cong \begin{cases}
        \R \quad &\text{if }k=0,1 \\
        \R^2 &\text{if }k=2
    \end{cases}
\end{equation*}
demonstrating that Poincar\'e duality can also fail in the $b$-setting, even on a compact manifold. On this same $b$-manifold, a Kähler structure was constructed in \cite[Section 3.4]{MR4722602}, making the $b$-sphere's tangent bundle into a Kähler Lie Algebroid.
\begin{prop}
    On the Kähler $b$-sphere, the Hard Lefschetz Theorem fails.
\end{prop}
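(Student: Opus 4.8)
The plan is to exploit the fact that the $\A$-Hard Lefschetz Theorem forces a symmetry on the dimensions of the Lie Algebroid cohomology groups that the $b$-sphere simply does not possess. Since $S^2$ has real dimension $2$, the $b$-tangent bundle ${}^bTS^2$ has real fiber dimension $2$, so $m=1$ in the statement of the theorem. The only power of $L$ that could fail to be an isomorphism is $k=1$, where the relevant map is
\begin{equation*}
    [L]^1 : H^0_\A({}^bS^2) \ra H^2_\A({}^bS^2), \qquad [\alpha] \mapsto [\omega_\A \wedge \alpha]
\end{equation*}
since the case $k=0$ is the identity and is trivially an isomorphism.

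First I would read off the dimensions of the two groups in play. Because the $b$-cohomology is by construction the Lie Algebroid cohomology of ${}^bTS^2$, the Mazzeo--Melrose computation recorded above gives $\dim H^0_\A({}^bS^2) = 1$ and $\dim H^2_\A({}^bS^2) = 2$. An isomorphism of finite-dimensional vector spaces necessarily preserves dimension, so since $1 \neq 2$ the map $[L]^1$ can be neither injective-onto nor surjective, hence is not an isomorphism. Exhibiting a single $k$ at which the Lefschetz map fails is all that is needed to conclude that the $\A$-Hard Lefschetz Theorem fails on the K\"ahler $b$-sphere.

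There is no genuine analytic obstacle here; the only points requiring care are the bookkeeping that identifies $m=1$ and the matching of the Mazzeo--Melrose dimensions against the indices $m-k$ and $m+k$. The conceptual payoff is that this does \emph{not} contradict Theorem 1.2: the $b$-tangent bundle is not elliptic, because its anchor is the inclusion ${}^bTS^2 \hookrightarrow TS^2$, whose complexified image along the equator $Z$ is contained in $(TZ)_\C$, a proper subspace of $(TS^2)_\C$. Thus the K\"ahler $b$-sphere is not Hodge-admissible, the hypotheses of Theorem 1.2 are not met, and the failure of Hard Lefschetz is exactly what one should expect. This is precisely the role the example is meant to play, namely showing that the ellipticity requirement cannot be dropped.

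One could alternatively route the failure through Theorem 1.3, noting that $\dim H^1_\A({}^bS^2)=1$ is odd; but since that theorem presupposes Hodge-admissibility (which is exactly what fails here), invoking it would be circular, and the direct dimension count for $[L]^1$ is the cleaner and more honest argument.
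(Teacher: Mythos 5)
Your proof is correct and matches the paper's argument exactly: both reduce to the single map $[L]^1\colon {}^bH^0({}^bS^2;\C) \ra {}^bH^2({}^bS^2;\C)$ and use the Mazzeo--Melrose computation to show it is a map between spaces of dimensions $1$ and $2$, hence not an isomorphism. Your additional remarks on the failure of ellipticity and the circularity of invoking Theorem 1.3 also mirror the remarks the paper places immediately after its proof.
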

\begin{proof}
   The only map we would need to check is the map $[L]:{}^bH^0({}^bS^2;\C) \ra {}^bH^2({}^bS^2;\C)$. However by the Mazzeo-Melrose Theorem, this is a linear map between two vector spaces of differing dimensions, and cannot be an isomorphism.
\end{proof}
\begin{remark}
    In this example, the ellipticity condition needed for a Hodge Decomposition fails as the inclusion map into the complexified tangent bundle of $S^2$ is not elliptic.
\end{remark}
\begin{remark}
At times, the classical Hard Lefschetz Theorem is stated instead as saying that the maps $[L]^k$ are surjections for all $k$ rather than isomorphisms. This does not impact the result for the $b$-sphere however, as the map in question is not surjective either. 
\end{remark}
\bibliographystyle{amsplain}
\bibliography{refs}

\end{document}